\begin{document}

\newtheorem{theorem}{Theorem}[section]
\newtheorem{result}[theorem]{Result}
\newtheorem{fact}[theorem]{Fact}
\newtheorem{example}[theorem]{Example}
\newtheorem{conjecture}[theorem]{Conjecture}
\newtheorem{lemma}[theorem]{Lemma}
\newtheorem{proposition}[theorem]{Proposition}
\newtheorem{corollary}[theorem]{Corollary}
\newtheorem{facts}[theorem]{Facts}
\newtheorem{props}[theorem]{Properties}
\newtheorem*{thmA}{Theorem A}
\newtheorem{ex}[theorem]{Example}
\theoremstyle{definition}
\newtheorem{definition}[theorem]{Definition}
\newtheorem{remark}[theorem]{Remark}
\newtheorem*{defna}{Definition}

\newcommand{\notes} {\noindent \textbf{Notes.  }}
\newcommand{\note} {\noindent \textbf{Note.  }}
\newcommand{\defn} {\noindent \textbf{Definition.  }}
\newcommand{\defns} {\noindent \textbf{Definitions.  }}
\newcommand{\x}{{\bf x}}
\newcommand{\z}{{\bf z}}
\newcommand{\B}{{\bf b}}
\newcommand{\V}{{\bf v}}
\newcommand{\T}{\mathbb{T}}
\newcommand{\Z}{\mathbb{Z}}
\newcommand{\Hp}{\mathbb{H}}
\newcommand{\D}{\mathbb{D}}
\newcommand{\R}{\mathbb{R}}
\newcommand{\N}{\mathbb{N}}
\renewcommand{\B}{\mathbb{B}}
\newcommand{\C}{\mathbb{C}}
\newcommand{\ft}{\widetilde{f}}
\newcommand{\dt}{{\mathrm{det }\;}}
 \newcommand{\adj}{{\mathrm{adj}\;}}
 \newcommand{\0}{{\bf O}}
 \newcommand{\av}{\arrowvert}
 \newcommand{\zbar}{\overline{z}}
 \newcommand{\xbar}{\overline{X}}
 \newcommand{\htt}{\widetilde{h}}
\newcommand{\ty}{\mathcal{T}}
\renewcommand\Re{\operatorname{Re}}
\renewcommand\Im{\operatorname{Im}}
\newcommand{\tr}{\operatorname{Tr}}

\newcommand{\ds}{\displaystyle}
\numberwithin{equation}{section}

\renewcommand{\theenumi}{(\roman{enumi})}
\renewcommand{\labelenumi}{\theenumi}

\title{Unicritical Blaschke Products and Domains of Ellipticity}

\author{Alastair Fletcher}
\address{Department of Mathematical Sciences, Northern Illinois University, DeKalb, IL 60115-2888. USA}
\email{fletcher@math.niu.edu}

\maketitle

\begin{abstract}
Elliptic M\"obius transformations of the unit disk are those for which there is a fixed point in $\D$. It is not hard to classify which M\"obius transformations are elliptic in terms of the parameters. The set of parameters can be identified with the solid torus $S^1 \times \D$, and the set of elliptic parameters is called the domain of ellipticity. In this paper, we study the domain of ellipticity for non-trivial unicritical Blaschke products. 
We will also study the set corresponding to the Mandelbrot set for this family, and show how it can be obtained from the domain of ellipticity by adding one point.
\end{abstract}

\section{Introduction}

\subsection{Families of holomorphic mappings}

Let $\mathcal{F}$ be a family of holomorphic functions defined on a domain $U\subset \C$ and parameterized by some set $Y$, that is, for each $y\in Y$, there is a function $f_y \in \mathcal{F}$. We often want to be able to classify some aspect of the dynamics of functions in $\mathcal{F}$ in terms of the parameter space $Y$. 

For example, the study of the family of all quadratic polynomials $Az^2+Bz+C$ can be reduced to the study of those of the family $\mathcal{F}  = \{ f_c(z):= z^2+c :c\in \C \}$ by conjugating by a suitable linear map. Conjugation does not change the dynamics in a significant way. The parameter space is a copy of the complex plane $Y = \{ c\in \C : f_c \in \mathcal{F} \}$. In the family of quadratic functions, the Julia set is either connected or totally disconnected. The connectedness locus $\mathcal{M}$ is defined to be $\{ c\in \C : J(f_c) \text{ is connected}\}$ and is called the Mandelbrot set.

Very often, we restrict a given family to a certain sub-family to help give insight into the behaviour of the whole family. For example, among all degree $d$ polynomials, the unicritical polynomials of the form $z^d+c$ are of particular interest. The  connectedness locus in parameter space is the so-called Multibrot set, see for example \cite{EMS}. Analogously, in the antiholomorphic setting, functions of the form $\overline{z}^d+c$ are studied, giving rise to the Multicorn set in parameter space, see for example \cite{M1}.

In this paper, we will focus on the hyperbolic versions of unicritical polynomials, namely unicritical Blaschke products of the form 
\[ B(z) = \left ( \frac{z-w}{1-\overline{w}z} \right )^n,\]
where $w\in \D$ and $n\geq 2$. We will first recall the classification theory of M\"obius mappings.

\subsection{M\"obius transformations}

Every M\"obius transformation of the unit disk $\D$ can be written in the form
\[ A(z) = e^{i\theta} \left ( \frac{z-w}{1-\overline{w}z} \right) \]
for some $\theta \in[0,2\pi)$ and $w\in \D$. This can be written alternatively as
\[ A(z) = \frac{Ce^{i\theta/2}z - Ce^{i\theta/2}w}{ Ce^{-i\theta/2} - Ce^{-i\theta/2}\overline{w}z},\]
where $C=(1-|w|^2)^{-1/2}$. The significance of writing $A$ in this form is that the matrix representing $A$, given by
\[ \begin{pmatrix}  Ce^{i\theta/2} & -Ce^{i\theta/2}w \\ - Ce^{-i\theta/2}\overline{w} &  Ce^{-i\theta/2} \end{pmatrix},\]
has determinant $1$ and trace-squared equal to
\begin{equation} \label{eq:tsq} \tau(A) =  \left ( \frac{e^{i\theta/2}}{\sqrt{1-|w|^2}} + \frac{e^{-i\theta/2}}{\sqrt{1-|w|^2}} \right )^2 = \frac{2(1+\cos\theta)}{1-|w|^2}.\end{equation}
We recall that M\"obius transformations of $\D$ can be classified as follows:
\begin{enumerate}[(i)]
\item $A$ is called {\it hyperbolic} if $A$ has two fixed points on $\partial \D$ and none in $\D$,
\item $A$ is called {\it parabolic} if $A$ has one fixed point on $\partial \D$ and none in $\D$,
\item $A$ is called {\it elliptic} if $A$ has no fixed points on $\partial \D$ and one in $\D$.
\end{enumerate}
Note that by the Schwarz-Pick Lemma (see for example \cite{BM}), if $A$ is not the identity, $A$ can have a maximum of one fixed point in $\D$, and so these three cases provide a complete classification of M\"obius transformations of $\D$. This classification can also be expressed in terms of $\tau$:
\begin{enumerate}[(i)]
\item $A$ is hyperbolic if and only if $\tau(A) >4$,
\item $A$ is parabolic if and only if $\tau(A) =4$,
\item $A$ is elliptic if and only if $0 \leq \tau(A)<4$.
\end{enumerate}
We see from \eqref{eq:tsq} that $\tau(A)$ is real when $A$ is represented in the normalized form as given above.
Hence if we fix $\theta\in [0,2\pi )$, the set of $w$-values for which $A$ is elliptic is given by the disk 
\[ \left \{w\in \D : |w| <\sin(\theta/2) \right \}.\]
Note this set is empty when $\theta =0$. Since the set of parameters for M\"obius transformations of $\D$ can be parameterized by the solid torus $ S^1 \times \D$, the {\it domain of ellipticity} is given by the open set
\begin{equation} \label{eq:e} E:= \left \{ (e^{i\theta},w) \in S^1 \times \D : |w| < \sin(\theta/2) \right \}.\end{equation}
The boundary of $E$ gives the set of parabolic parameters by the classification in terms of $\tau$.

\subsection{Blaschke products}

A {\it finite Blaschke product} is a function $B:\D \to \D$ of the form
\begin{equation}\label{eq:bl} B(z) = e^{i\theta} \prod_{i=1}^n \left ( \frac{z-w_i}{1-\overline{w_i}z }\right ),\end{equation}
for some $\theta \in [0,2\pi)$ and $w_i \in \D$ for $i=1,\ldots,n$. A Blaschke product can also be viewed as a rational function on $\C \cup \{ \infty \}$. We call a Blaschke product non-trivial if $n\geq 2$.

Every finite degree self-mapping of $\D$ is a finite Blaschke product \cite[p.19]{BM}, and so they can be viewed as analogues for polynomials in the disk. Again by the Schwarz-Pick Lemma, $B$ can have at most one fixed point in $\D$. If $z_0$ is a fixed point of $B$, then it is straightforward to show that $1/\overline{z_0}$ is also a fixed point of $B$. Hence all but possibly two (with the convention that infinity is a fixed point if some $w_i=0$) of the fixed points of $B$ must lie on $\partial \D$. 

The Denjoy-Wolff Theorem \cite[p.58]{Milnor} states that if $f:\D\to \D$ is holomorphic then there is some $z_0 \in \overline{\D}$ such that $f^n(z) \to z_0$ for every $z\in \D$. We call such a point a {\it Denjoy-Wolff point} of $f$. There is a classification of finite Blaschke products in analogy with that for M\"obius transformations:
\begin{enumerate}[(i)]
\item $B$ is called {\it hyperbolic} if the Denjoy-Wolff point $z_0$ of $B$ lies on $\partial \D$ and $|B'(z_0)|<1$,
\item $B$ is called {\it parabolic} if the Denjoy-Wolff point $z_0$ of $B$ lies on $\partial \D$ and $|B'(z_0)|=1$,
\item $B$ is called {\it elliptic} if the Denjoy-Wolff point $z_0$ of $B$ lies in $\D$. In this case, we must have $|B'(z_0)|<1$.
\end{enumerate}

\subsection{Dynamics of Blaschke products}

The Fatou set of a rational map $f:\overline{\C} \to \overline{\C}$ is the set
\[ F(f) = \{z\in \overline{\C} : \text{ the family } (f^n)_{n=1}^{\infty} \text{ is normal in some neighbourhood of } z \}.\]
This is the domain of stable behaviour of the iterates of $f$. The set of chaotic behaviour is the Julia set $J(f)$ and is given by $\overline{\C} \setminus F(f)$. The Fatou set is always open and hence the Julia set is always closed.

For finite Blaschke products, the Julia set is always contained in $\partial \D$ and is either the whole of $\partial \D$ or a Cantor subset of $\partial \D$. These two cases can be characterized as follows; see \cite[p.58]{CG} and \cite{BCH,CDP} as well as \cite{KP} for a discussion of this characterization.

\begin{theorem}
Let $B$ be a non-trivial finite Blaschke product. Then:
\begin{enumerate}[(i)]
\item if $B$ is elliptic, $J(B) = \partial \D$,
\item if $B$ is hyperbolic, $J(B)$ is a Cantor subset of $\D$,
\item if $B$ is parabolic and $z_0 \in \partial \D$ is the Denjoy-Wolff point of $B$, $J(B) = \partial \D$ if $B''(z_0) = 0$ and $J(B)$ is a Cantor subset of $\partial \D$ if $B''(z_0)\neq 0$.
\end{enumerate}
\end{theorem}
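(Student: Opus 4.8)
The plan is to treat all three parts through the boundary circle map, using three standard ingredients together with the fact (already recorded) that $J(B)\subseteq\partial\D$: the Denjoy--Wolff Theorem, so that every point of $\D$ converges to the Denjoy--Wolff point $z_0$ and hence $\D\subseteq F(B)$; the reflection symmetry $B(1/\overline z)=1/\overline{B(z)}$, so that $\sigma(z)=1/\overline z$ conjugates $B$ to itself, interchanges $\D$ with $\overline{\C}\setminus\overline{\D}$, fixes $\partial\D$ pointwise, and permutes the critical points (giving $n-1$ of them in $\D$, $n-1$ in the exterior, and none on $\partial\D$, as $B$ restricts to a covering map of $\partial\D$); and the general facts that $J(B)$ is compact and perfect, that the boundary of every Fatou component lies in $J(B)$, that the backward orbit of any non-exceptional point is dense in $J(B)$, and that every attracting or parabolic cycle attracts a critical point.

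For part (i), $z_0\in\D$ is an attracting fixed point and, by the symmetry, $1/\overline{z_0}$ is an attracting fixed point in the exterior; by Denjoy--Wolff, $\D$ is the immediate basin of $z_0$ and the exterior that of $1/\overline{z_0}$. These are distinct Fatou components with common boundary $\partial\D$, so $\partial\D\subseteq J(B)\subseteq\partial\D$ and $J(B)=\partial\D$. For the Cantor conclusions I would first isolate a single criterion: if even one point of $\partial\D$ lies in $F(B)$, then $J(B)$ is a proper closed subset of the circle, and since the backward orbit of such a point (which we may take non-exceptional, and which stays on $\partial\D\cap F(B)$ by complete invariance) is dense in $J(B)$, the set $J(B)$ has empty interior in $\partial\D$, hence is totally disconnected; being also compact and perfect, it is a Cantor set. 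In case (ii) the criterion applies at once, since $z_0\in\partial\D$ is attracting, so $z_0\in F(B)$ and a whole subarc is attracted to $z_0$ along $\partial\D$.

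Case (iii) is the delicate one. Now $z_0\in\partial\D$ is parabolic with $B'(z_0)=1$, hence $z_0\in J(B)$, and everything hinges on the local dynamics of $b=B|_{\partial\D}$. Writing $B(e^{i\theta})=e^{i\psi(\theta)}$ with $z_0=e^{i\theta_0}$, I would use $\psi'(\theta)=\Phi(e^{i\theta})$, where $\Phi(z)=zB'(z)/B(z)=\sum_i(1-|w_i|^2)/|1-\overline{w_i}z|^2>0$ is a sum of Poisson kernels with $\Phi(z_0)=1$, together with the elementary fact that $\psi''(\theta_0)$ is a nonzero multiple of $B''(z_0)$; thus $B''(z_0)=0$ exactly when $\theta_0$ is a critical point of $\Phi$. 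The tangential dynamics are read off from $\psi(\theta)-\theta$, which vanishes at $\theta_0$ and has derivative $\Phi-1$: if $B''(z_0)\neq0$ then $\Phi-1$ changes sign at $\theta_0$, so $z_0$ is semi-stable and an arc on one side is attracted to $z_0$, giving a Fatou point on $\partial\D$ and hence a Cantor Julia set by the criterion above; whereas if $B''(z_0)=0$ I will argue that $\theta_0$ is a local minimum of $\Phi$, forcing $\Phi\geq1$ near $\theta_0$, so that $\psi(\theta)-\theta$ changes sign from negative to positive and $z_0$ repels on both sides along $\partial\D$.

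The main obstacle is precisely this last step: proving that $B''(z_0)=0$ forces $\theta_0$ to be a local minimum of the expansion factor $\Phi$ (equivalently, that the two attracting petals of the parabolic point open transversally into $\D$ and into the exterior rather than tangentially along $\partial\D$), so that no subarc is swept into $z_0$; ruling out the competing ``tangentially attracting'' configuration is where the specific structure of a finite Blaschke product must enter. Granting this, I would finish (iii) by ruling out any other Fatou point on $\partial\D$: by the no-wandering-domains theorem any such point is eventually periodic, and since by Denjoy--Wolff every critical point of $B$ is attracted to $z_0$, the only periodic Fatou components are the petals of the parabolic basin of $z_0$; as these meet $\partial\D$ only through the (now absent) tangential directions, $F(B)\cap\partial\D=\emptyset$, giving $J(B)=\partial\D$.
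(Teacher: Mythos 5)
A point of order first: the paper does not prove this theorem at all --- it is quoted from the literature (\cite[p.58]{CG} for the elliptic and hyperbolic cases, \cite{BCH,CDP} for the parabolic dichotomy) --- so there is no in-paper argument to compare against, and your proposal has to stand on its own. Most of it does. Parts (i) and (ii) are correct and standard: the two symmetric attracting basins force $\partial \D \subseteq J(B)$ in the elliptic case, and your ``one Fatou point on $\partial\D$ implies Cantor'' criterion is sound (points of $\partial\D$ are never exceptional, since $B|_{\partial\D}$ is a covering, so their backward orbits are dense in $J(B)$; a closed subset of $S^1$ with empty interior is totally disconnected; compact, perfect and totally disconnected means Cantor). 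The identity $\psi''(\theta_0)=iz_0B''(z_0)$ and the resulting one-sided attraction when $B''(z_0)\neq 0$ are also correct, as is the endgame ruling out other Fatou points on the circle once two-sided repulsion is known.

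The genuine gap is exactly where you flag it, and flagging it does not fill it: you assert, but do not prove, that $B''(z_0)=0$ forces $\theta_0$ to be a local minimum of $\Phi$, i.e.\ that $z_0$ repels along $\partial\D$ on both sides. The information you actually have at that point --- $\Phi(\theta_0)=1$ and $\Phi'(\theta_0)=0$ --- is equally consistent with a local maximum, so some structural input about Blaschke products is unavoidable, and without it the ``tangentially attracting'' configuration is not excluded; as written, the $B''(z_0)=0$ half of (iii) is incomplete. One clean way to close it: send $z_0$ to $\infty$ by a Cayley transform, so that $B$ becomes a real rational self-map $g$ of the upper half-plane $\Hp$ fixing $\infty$ with multiplier $1$; the Herglotz--Nevanlinna representation of such a map gives $g(w)=w+\beta+\sum_{j=1}^{n-1}m_j/(x_j-w)$ with $m_j>0$ and $x_j\in\R$, and a short computation shows $\beta=iz_0B''(z_0)$, so $\beta=0$ precisely in the case at hand. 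Then $g(x)-x=-\bigl(\sum_j m_j\bigr)/x+O(1/x^2)$ on $\R$, which is negative for $x\gg 0$ and positive for $x\ll 0$: both real directions repel from $\infty$, which is the statement you needed. With that inserted, your argument goes through; without it, it does not.
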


We remark that the parabolic case is the most subtle. Denoting by $d$ the hyperbolic distance in $\D$, a finite Blaschke product is said to be of {\it zero hyperbolic step} if there exists $z\in \D$ such that $\lim_{n\to \infty} d(B^n(z),B^{n+1}(z)) =0$. If this holds for some $z\in \D$, then it holds for every point in $\D$. If $B$ is not of zero hyperbolic step, then since $d(B^n(z),B^{n+1}(z))$ is non-increasing by the Schwarz-Pick Lemma, we have $\lim _{n\to \infty} d(B^n(z),B^{n+1}(z)) >0$. In this case, $B$ is said to be of {\it positive hyperbolic step}. See \cite{CDP}, as well as \cite{KP}, for more details.

\begin{theorem}[\cite{BCH,CDP}]\label{thm:f''}
A parabolic finite Blaschke product $B$ is of zero hyperbolic step if and only if $J(B) = \partial \D$ if and only if $B''(z_0)=0$, where $z_0$ is the Denjoy-Wolff point of $B$.
\end{theorem}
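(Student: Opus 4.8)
The plan is to transfer everything to the upper half-plane model, where the parabolic fixed point sits at $\infty$ and all three conditions can be read off from a single local expansion. After conjugating $B$ by a rotation we may assume $z_0 = 1$: rotations are Blaschke automorphisms that preserve hyperbolic distance and Julia sets and do not affect whether $B''(z_0)$ vanishes. Let $\phi\colon \D \to \Hp$ be the Cayley map with $\phi(1) = \infty$, and set $g = \phi \circ B \circ \phi^{-1}$, a degree-$n$ rational self-map of $\Hp$ fixing $\infty$ and carrying $\R \cup \{\infty\}$ to itself. Since $\phi$ is a hyperbolic isometry, $B$ is of zero hyperbolic step if and only if $g$ is; since $\phi$ extends to a homeomorphism of closures, $J(B) = \partial \D$ if and only if $J(g) = \R \cup \{\infty\}$. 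By the Denjoy--Wolff theorem the orbits $w_n := g^n(w)$ tend to $\infty$.

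The heart of the matter is the expansion of $g$ at $\infty$. Because $B$ is parabolic with $B'(1) = 1$, I would write $B(z) = 1 + (z-1) + \tfrac{1}{2}B''(1)(z-1)^2 + O((z-1)^3)$ and push it through $\phi$; a direct computation (using $\phi^{-1}(w)-1 = O(1/w)$) gives $g(w) = w + \tau + O(1/w)$ as $w \to \infty$, with $\tau = iB''(1)$. Since $g$ must preserve $\R \cup \{\infty\}$, the translation constant $\tau$ is real; in particular $\tau = 0$ if and only if $B''(1) = 0$. The hyperbolic-step equivalence now follows from the estimate $d(w_n, w_{n+1}) \asymp |g(w_n) - w_n|/\Im w_n$. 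If $\tau \neq 0$ then $\Re w_n \sim n\tau$, the imaginary increments are $O(1/|w_n|^2)$ and hence summable, so $\Im w_n$ tends to a finite positive limit and $d(w_n, w_{n+1}) \to |\tau|/\Im w_\infty > 0$: positive step. If $\tau = 0$ then $g(w) = w + O(1/w)$, so $d(w_n, w_{n+1}) = O\big(1/(|w_n|\,\Im w_n)\big) \to 0$: zero step. This yields \emph{zero hyperbolic step} $\iff \tau = 0 \iff B''(1) = 0$.

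For the Julia-set equivalence the discriminator is whether the parabolic point $\infty$ has an attracting petal that reaches the boundary $\R$. As $B$ is parabolic, $\infty$ is the \emph{only} non-repelling boundary periodic point, so any Fatou interval on $\R \cup \{\infty\}$ must come from a petal at $\infty$; thus $J(g) = \R \cup \{\infty\}$ exactly when no petal meets the boundary. If $\tau \neq 0$, say $\tau > 0$, then $g(x) = x + \tau + o(1) > x$ for large real $x$, so a half-line $(x_*, \infty)$ is absorbed into the attracting petal and lies in the Fatou set; hence $J(g) \neq \R \cup \{\infty\}$ and $J(B)$ is a Cantor set. If $\tau = 0$, the substitution $u = 1/w$ conjugates $g$ to a germ $u \mapsto u - \gamma u^3 + \cdots$ with four alternating petals; the two attracting petals must contain the interior orbits tending to $\infty$ and so lie along the imaginary directions, forcing the real directions to be repelling. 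No petal then reaches $\R$, so $J(g) = \R \cup \{\infty\}$, i.e. $J(B) = \partial \D$.

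The main obstacle is the third step: making the Leau--Fatou flower analysis at a \emph{boundary} parabolic point rigorous, and in particular justifying that the real directions are precisely the repelling petals (which is what is forced by the Denjoy--Wolff convergence of interior orbits) and that no other Fatou component can intrude on the boundary. Once this petal structure is pinned down, the chain of equivalences closes, since all three properties reduce to the single dichotomy $\tau = iB''(z_0) = 0$ versus $\tau \neq 0$.
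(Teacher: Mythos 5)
First, a point of comparison: the paper does not prove this statement at all --- it is quoted from \cite{BCH,CDP} (the ``parabolic zoo'') --- so there is no internal proof to measure you against, and I am assessing your argument on its own. Your strategy --- pass to the half-plane with the Denjoy--Wolff point at $\infty$, extract the translation constant $\tau=iB''(z_0)$ from $g(w)=w+\tau+O(1/w)$, and read off both the hyperbolic step and the petal structure from whether $\tau$ vanishes --- is the standard route, and most of it is sound. The computation $\tau=iB''(1)$ is correct, $\tau$ is real because $g$ preserves $\R\cup\{\infty\}$, and the two hyperbolic-step cases go through essentially as written, with two caveats you should make explicit: the summability of the imaginary increments when $\tau\neq 0$ uses that the coefficient $c$ of $1/w$ is real (again because $g$ preserves $\R$), so that $\Im(c/w_n)=-c\,\Im w_n/|w_n|^2$; and the estimate $d(w_n,w_{n+1})=O(1/(|w_n|\Im w_n))\to 0$ when $\tau=0$ needs $\Im w_n$ bounded below, which is the Julia--Wolff inequality $\Im g(w)\geq \Im w$ at the Denjoy--Wolff point.

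The genuine gap is exactly where you suspect it, but the repair you propose does not work as stated. When $\tau=0$ you claim the attracting directions of $u\mapsto u-cu^{3}+\cdots$ must be $\pm i$ ``because the attracting petals must contain the interior orbits.'' That does not follow: if the attracting directions were $\pm 1$, the attracting petals would still intersect the lower half $u$-plane (the image of $\Hp$) and could absorb every interior orbit tangentially to $\R$, so Denjoy--Wolff convergence alone does not distinguish the two configurations. What pins down the directions is again the Julia--Wolff inequality $\Im(g(w)-w)\geq 0$ on $\Hp$, equivalently the Nevanlinna--Herglotz representation $g(w)-w=\int d\mu(t)/(t-w)$ with $\mu\geq 0$ finite (valid since $g(w)-w\to\tau=0$ at $\infty$; for rational $g$ the measure is a finite sum of point masses), which gives $c=-\mu(\R)<0$ because $g\neq\mathrm{id}$. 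With $c<0$ the attracting directions are $\pm i$, the repelling ones are real, and real points near $\infty$ move away from $\infty$, so no attracting petal meets $\R$. You then still need the standard but not free fact that every periodic Fatou component of a parabolic Blaschke product is a petal basin at the Denjoy--Wolff point (Sullivan's theorem plus Denjoy--Wolff to exclude other attracting or parabolic cycles and rotation domains) before concluding $J(g)=\R\cup\{\infty\}$. With the Herglotz step supplied the chain of equivalences closes; without it, your third paragraph is not yet a proof.
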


\subsection{Statement of results}

We will specialize to the case of unicritical Blaschke products, that is, Blaschke products for which there is a unique point $w\in \D$ such that $B'(w)=0$. If $B$ has degree $n$, then the critical point is taken with multiplicity $n-1$. Further, a unicritical Blaschke product is necessarily finite. We will want to conjugate by M\"obius mappings to reduce the unicritical Blaschke product to a simple form.

\begin{proposition}
\label{prop1}
Let $B:\D \to \D$ be a unicritical Blaschke product of degree $n$. Then $B$ is conjugate by a M\"obius mapping to a unique Blaschke product of the form
\[ \left ( \frac{z-w}{1-\overline{w}z} \right )^n,\]
for some $w\in \D$ where $\arg(w) \in [0,\frac{2\pi}{n-1})$.
\end{proposition}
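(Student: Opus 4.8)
The plan is to exploit the fact that a degree-$n$ Blaschke product with a critical point of multiplicity $n-1$ is \emph{totally ramified} over its critical value. Write $\phi_a(z) = \frac{z-a}{1-\overline{a}z}$ for the standard disk automorphism, and let $w_0 \in \D$ be the critical point of $B$, with critical value $b_0 = B(w_0)$. Since $w_0$ has multiplicity $n-1$, the local degree of $B$ at $w_0$ equals $n$, which is also the global degree; hence $w_0$ is the only preimage of $b_0$, taken with multiplicity $n$.

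First I would reduce to the case where the critical value is $0$. Conjugating by a disk automorphism sends Blaschke products to Blaschke products of the same degree (as a composition of Blaschke products, or because any finite-degree self-map of $\D$ is a Blaschke product) and carries critical points to critical points with the same multiplicity. For any disk automorphism $M$ with $M(b_0)=0$, the map $C = M B M^{-1}$ is therefore a degree-$n$ unicritical Blaschke product whose critical point $w = M(w_0)$ satisfies $C(w) = M(B(w_0)) = M(b_0) = 0$. By the total ramification above, $w$ is the unique zero of $C$, of multiplicity $n$, so $C$ must have the form $C(z) = e^{i\beta}\phi_w(z)^n$ for some $\beta$.

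It remains to dispose of the unimodular factor $e^{i\beta}$ and to normalize $\arg w$. The automorphisms $M$ with $M(b_0) = 0$ form the one-parameter family $M = R_\gamma \circ \phi_{b_0}$, where $R_\gamma(z) = e^{i\gamma}z$. Conjugating the case $\gamma = 0$ by $R_\gamma$ and using the identity $\phi_a(e^{-i\gamma}z) = e^{-i\gamma}\phi_{e^{i\gamma}a}(z)$, a short computation shows that the resulting $C$ equals $e^{i(\beta_0 + (1-n)\gamma)}\phi_{e^{i\gamma}w_0'}(z)^n$, where $w_0' = \phi_{b_0}(w_0)$ and $\beta_0$ is the value of $\beta$ at $\gamma = 0$. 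Because $n \geq 2$, the coefficient $1-n$ is nonzero, so I can solve $(1-n)\gamma \equiv -\beta_0 \pmod{2\pi}$ and kill the leading factor, producing exactly $\phi_w(z)^n$ with $w = e^{i\gamma}w_0'$. This congruence has $n-1$ solutions $\gamma$ differing by multiples of $\frac{2\pi}{n-1}$, which rotate $w$ by $(n-1)$-th roots of unity; fixing $\arg w \in [0, \frac{2\pi}{n-1})$ singles out one.

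For uniqueness I would argue directly that if a disk automorphism $N$ conjugates $\phi_{w_1}^n$ to $\phi_{w_2}^n$, then $N$ must fix the common critical value $0$ and carry the critical point $w_1$ to $w_2$; hence $N = R_\gamma$ is a rotation, and the same computation forces $e^{i(1-n)\gamma} = 1$ and $w_2 = e^{i\gamma}w_1$. Thus $w_2$ is $w_1$ rotated by an $(n-1)$-th root of unity, so the constraint $\arg w \in [0,\frac{2\pi}{n-1})$ pins down $w$ uniquely (the degenerate case $w=0$, where $B$ is conjugate to $z^n$, being trivially unique). The only real bookkeeping obstacle is tracking the unimodular constant through the conjugations and confirming that the $(n-1)$-fold ambiguity in $\arg w$ is exactly what the normalization removes; the structural heart — total ramification forcing the pure power $\phi_w^n$ — is immediate.
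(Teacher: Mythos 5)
Your proof is correct, and it differs from the paper's in the one structural step that matters. The paper first moves the \emph{critical point} to the origin and then invokes Zakeri's factorization result (\cite[Corollary 1]{Zakeri}) to write the resulting map as $M(z^n)$ for a M\"obius map $M$, conjugating once more by $M^{-1}$ to reach the form $[M(z)]^n = e^{in\theta}\bigl(\frac{z-u}{1-\overline{u}z}\bigr)^n$. You instead move the \emph{critical value} to the origin and argue by counting preimages: the critical point of multiplicity $n-1$ has local degree $n$, equal to the global degree, so it is the unique zero of the conjugated map, with multiplicity $n$, forcing the form $e^{i\beta}\phi_w(z)^n$ directly. Your route is self-contained (no external citation) and gets to the normal form in one conjugation rather than two; the paper's route outsources exactly this ramification argument to Zakeri. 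From that point on the two proofs coincide: the same rotation computation $R_\gamma\, e^{i\beta}\phi_w^n\, R_\gamma^{-1} = e^{i(\beta+(1-n)\gamma)}\phi_{e^{i\gamma}w}^n$ kills the unimodular factor and normalizes $\arg w$, and the same observation (a conjugating M\"obius map must send critical point to critical point and critical value $0$ to $0$, hence is a rotation with $e^{i(1-n)\gamma}=1$) gives uniqueness. One tiny bookkeeping point you already flag correctly: $w=0$ needs the convention that the condition on $\arg w$ is vacuous there, which the paper also leaves implicit.
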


We can therefore consider the set 
\[ \mathcal{B}_n  = \left \{ B_w(z):= \left ( \frac{z-w}{1-\overline{w}z} \right )^n : w\in \D, \arg(w) \in \left [0,\frac{2\pi}{n-1} \right ) \right \}\]
of normalized unicritical Blaschke products of degree $n$, which is parameterized by the sector 
\[ S_n = \{ w\in \D: \arg(w) \in [0,\frac{2\pi}{n-1}) \} .\]
We will be interested in two subsets of parameter space, the set of ellipticity
\[ \mathcal{E}_n = \{w\in S_n : J(B_w) \text{ is elliptic}\},\]
and the connectedness locus
\[ \mathcal{M}_n = \{ w\in S_n : J(B_w) =\partial \D \}.\]
We denote by $\widetilde{\mathcal{E}_n}$  the set obtained by unioning $\mathcal{E}_n$ with $R_j(\mathcal{E}_n)$ for $j=1,\ldots,n-2$, where $R_j$ is the rotation through angle $2\pi j/(n-1)$, and denote by $\widetilde{\mathcal{M}_n}$ the corresponding set for $\mathcal{M}_n$.

\begin{theorem}\label{thm:1}
Let $n\geq 2$.
The set $\widetilde{\mathcal{E}_n} \subset \D$ is a starlike domain about $0$ which contains the disk $ \{w\in \D : |w| <\frac{n-1}{n+1} \}$
\end{theorem}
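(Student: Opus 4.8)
The plan is to read off ellipticity from the Denjoy--Wolff point and from the multipliers of the boundary fixed points of $B_w$. Writing $\phi_w(z)=\frac{z-w}{1-\overline{w}z}$ we have $B_w=(\phi_w)^n$ and $\phi_w'(z)=\frac{1-|w|^2}{(1-\overline{w}z)^2}$. If $z_0\in\partial\D$ is a fixed point of $B_w$, then $\phi_w(z_0)^n=z_0$, and using the identity $\phi_w'(z_0)=|\phi_w'(z_0)|\,\phi_w(z_0)/z_0$ valid on the unit circle one obtains the clean formula
\[ B_w'(z_0)=n\,|\phi_w'(z_0)|=\frac{n\,(1-|w|^2)}{|1-\overline{w}z_0|^2}>0. \]
The dynamical input I would use is that for a non-elliptic finite Blaschke product the Denjoy--Wolff point lies on $\partial\D$ with multiplier in $(0,1]$; combined with the classification above this says that $B_w$ is elliptic if and only if every fixed point of $B_w$ on $\partial\D$ is repelling, that is $B_w'(z_0)>1$.

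With this the inscribed disk is immediate. For $z_0=e^{i\phi}\in\partial\D$ and $w=re^{i\alpha}$ we have $|1-\overline{w}z_0|^2=1-2r\cos(\phi-\alpha)+r^2\le(1+r)^2$, so
\[ B_w'(z_0)\ge \frac{n(1-r^2)}{(1+r)^2}=\frac{n(1-r)}{1+r}. \]
This exceeds $1$ exactly when $n(1-r)>1+r$, i.e. when $r<\frac{n-1}{n+1}$. Hence for $|w|<\frac{n-1}{n+1}$ every boundary fixed point of $B_w$ is repelling, the Denjoy--Wolff point lies in $\D$, and $B_w$ is elliptic. Since this disk is rotationally symmetric, applying the rotations $R_j$ to $\mathcal{E}_n\subset S_n$ gives $\{w:|w|<\frac{n-1}{n+1}\}\subset\widetilde{\mathcal{E}_n}$. (The threshold is sharp: solving $n(1-r^2)=(1+r)^2$ is precisely the parabolic--multiplier equation evaluated at $\cos(\phi-\alpha)=-1$.)

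For starlikeness I would first reduce to the sector. Each $\mathcal{E}_n$ is open, being an open condition on multipliers, and the sectors $R_j(S_n)$, $j=0,\dots,n-2$, tile $\D$; so $\widetilde{\mathcal{E}_n}=\bigcup_j R_j(\mathcal{E}_n)$ is open and invariant under $R_1$. Because the sectors are radial, it suffices to prove that $\mathcal{E}_n$ is radially inward closed: if $w\in\mathcal{E}_n$ then $tw\in\mathcal{E}_n$ for all $t\in[0,1]$. Granting this, for $w\in R_j(\mathcal{E}_n)$ the segment $[0,R_{-j}w]$ lies in $\mathcal{E}_n$, and applying $R_j$ places $[0,w]$ in $\widetilde{\mathcal{E}_n}$; together with openness this exhibits $\widetilde{\mathcal{E}_n}$ as a starlike domain about $0$.

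The remaining, and main, difficulty is the radial monotonicity, which I would attack by showing that the parabolic locus $\partial\mathcal{E}_n$ is a radial graph. Imposing the two parabolic conditions $B_w'(z_0)=1$ and $B_w(z_0)=z_0$ at $z_0=e^{i\phi}$, the multiplier equation becomes
\[ \cos(\phi-\alpha)=\frac{(n+1)r^2-(n-1)}{2r}, \]
which forces $r\ge\frac{n-1}{n+1}$ and determines $\beta:=\phi-\alpha$ as a function of $r$, while the fixed-point equation $\phi_w(z_0)^n=z_0$ gives the compatibility relation $(n-1)\phi+n\,\delta(\beta)\equiv0\ (\mathrm{mod}\ 2\pi)$, where $\delta(\beta)=2\arctan\!\big(\tfrac{r\sin\beta}{1-r\cos\beta}\big)$. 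Eliminating $\phi$ expresses $\alpha$ as a function of $r$ on $[\frac{n-1}{n+1},1)$, running from $\alpha\equiv-\pi$ at $r=\frac{n-1}{n+1}$ to $\alpha\equiv0$ as $r\to1^-$ (modulo $\tfrac{2\pi}{n-1}$). I expect the crux of the whole proof to be verifying that $r\mapsto\alpha(r)$ is strictly monotone, equivalently that the minimal boundary multiplier $w\mapsto\min\{B_w'(z_0):B_w(z_0)=z_0,\ z_0\in\partial\D\}$ strictly decreases along rays from the origin; once this is established, each ray meets $\partial\mathcal{E}_n$ exactly once, which yields the radial monotonicity and completes the argument.
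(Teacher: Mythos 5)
Your criterion for ellipticity (all boundary fixed points repelling), your computation of $|B_w'|$ on $\partial\D$, and your derivation of the inscribed disk $\{|w|<\tfrac{n-1}{n+1}\}$ all match the paper, as does your reduction of starlikeness to the claim that each $\mathcal{E}_n$ is radially inward closed. But the proposal has a genuine gap exactly where you say the crux lies: you never prove that each ray from the origin meets the parabolic locus at most once (equivalently, that $r\mapsto\alpha(r)$ is monotone, or that the minimal boundary multiplier decreases along rays). As written, the argument ends with ``I expect the crux of the whole proof to be verifying\dots'' --- that is the theorem's content, not a detail. Without it, nothing rules out a ray re-entering $\mathcal{E}_n$ after leaving it.

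The paper closes this gap by a mechanism you should compare with your multiplier formulation. It introduces the arc $K=K(s)=\{z\in\partial\D:|B'(z)|\le 1\}$ (empty for $s<\tfrac{n-1}{n+1}$, a single point at $s=\tfrac{n-1}{n+1}$, an arc centred at $e^{i(\psi+\pi)}$ thereafter), computes the arc-lengths $|K|=2\pi-2\cos^{-1}(t)$ and $|B(K)|=n(2\pi-2\cos^{-1}(u))$ explicitly, and proves the differential inequality $\frac{d}{ds}|K|>\frac{d}{ds}|B(K)|$ for $s>\tfrac{n-1}{n+1}$. Since a non-elliptic parameter is exactly one for which $K$ contains a fixed point, and $K$ and $B(K)$ are arcs whose centres are pinned ($e^{i(\psi+\pi)}$ and its image), this growth comparison forces the endpoint $e^{i\phi_2}$ of $K$ to satisfy $B(e^{i\phi_2})=e^{i\phi_2}$ for exactly one value $s_0$, with ellipticity below $s_0$ and hyperbolicity above. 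Note also that the paper must treat separately the rays on which $B(e^{i(\psi+\pi)})$ is antipodal to $e^{i(\psi+\pi)}$ (its Lemma on the condition $(n-1)\psi\in 2\pi\Z$ or $(2\Z+1)\pi$ according to parity of $n$): on those rays the parabolic locus is never reached and $s_0=1$. Your parametrization of the parabolic locus by $r$ would need to accommodate these exceptional rays as well; your claimed endpoint behaviour of $\alpha(r)$ at $r=\tfrac{n-1}{n+1}$ also does not quite match the fixed-point condition $(n-1)\alpha\equiv 0$ or $\pi\pmod{2\pi}$ depending on the parity of $n$. To complete your route you would need to carry out the elimination and prove the monotonicity of $\alpha(r)$ by hand; the paper's arc-length comparison is the cleaner substitute for that computation.
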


We next discuss the connectedness locus $\mathcal{M}_n$.
Since every elliptic parameter gives rise to a connected Julia set, the question of which parabolic elements give a connected Julia set remains. It follows from the proof of Theorem \ref{thm:1} that the parabolic elements are parameterized by the relative boundary of $\mathcal{E}_n$ in $S_n$.

\begin{theorem}\label{cor:1}
The set $\mathcal{M}_n$ is the union of $\mathcal{E}_n$ together with one point on the relative boundary of $\mathcal{E}_n$ in $ S_n$, given by $\partial E_n \cap \{w:|w| = \frac{n-1}{n+1} \}$.
\end{theorem}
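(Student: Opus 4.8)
The plan is to pin down $\mathcal{M}_n$ by sorting the parameters according to their dynamical type. Every elliptic parameter lies in $\mathcal{M}_n$ by part (i) of the Julia set classification recalled above, so $\mathcal{E}_n \subseteq \mathcal{M}_n$; every hyperbolic parameter is excluded by part (ii), since there $J(B_w)$ is a Cantor set. As noted before the statement, the proof of Theorem~\ref{thm:1} shows that the relative boundary of $\mathcal{E}_n$ in $S_n$ consists precisely of the parabolic parameters, so it remains to decide, for each parabolic $B_w$, whether $J(B_w) = \partial\D$. By Theorem~\ref{thm:f''} this holds if and only if $B_w''(z_0) = 0$, where $z_0 \in \partial\D$ is the Denjoy--Wolff point. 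Thus $\mathcal{M}_n = \mathcal{E}_n \cup \{ w : B_w \text{ is parabolic and } B_w''(z_0) = 0 \}$, and the whole theorem reduces to showing this last set is a single point with $|w| = \frac{n-1}{n+1}$.

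For the computation I would write $B_w = \phi_w^{\,n}$ with $\phi_w(z) = \frac{z-w}{1-\overline{w}z}$ and differentiate. Since $B_w'' = n\phi_w^{\,n-2}\bigl[(n-1)(\phi_w')^2 + \phi_w\,\phi_w''\bigr]$ and $\phi_w(z_0)\neq 0$, the condition $B_w''(z_0)=0$ is equivalent to $(n-1)\phi_w'(z_0)^2 + \phi_w(z_0)\phi_w''(z_0) = 0$. Feeding in $\phi_w'(z) = \frac{1-|w|^2}{(1-\overline{w}z)^2}$ and $\phi_w''(z) = \frac{2\overline{w}(1-|w|^2)}{(1-\overline{w}z)^3}$, the common powers of $(1-\overline{w}z_0)$ cancel and the condition collapses to the linear relation $(n-1)(1-|w|^2) = -2\overline{w}(z_0-w)$, that is,
\[ (n-1) - (n+1)|w|^2 = -2\overline{w}z_0. \]
The left-hand side is real, so $\overline{w}z_0$ is forced to be real, and hence $1-\overline{w}z_0 = \tfrac{1}{2}(n+1)(1-|w|^2)$.

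I would then combine this with parabolicity. The parabolic condition $|B_w'(z_0)| = 1$ reads $n\frac{1-|w|^2}{|1-\overline{w}z_0|^2} = 1$; substituting the real value of $1-\overline{w}z_0$ found above gives $n(1-|w|^2) = \tfrac14 (n+1)^2(1-|w|^2)^2$, and dividing by $1-|w|^2$ yields $1-|w|^2 = \frac{4n}{(n+1)^2}$, i.e.\ $|w| = \frac{n-1}{n+1}$, exactly the claimed modulus. Back-substituting shows $\overline{w}z_0 = -|w|$, so $z_0 = -w/|w|$ and in fact $\phi_w(z_0) = z_0$; the fixed point equation $\phi_w(z_0)^n = z_0$ then forces $z_0^{\,n-1}=1$, so $z_0$ is an $(n-1)$-th root of unity. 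This pins $\arg(w)$ to a single admissible value in $[0,\frac{2\pi}{n-1})$, producing exactly one parameter $w$, which lies on the parabolic boundary $\partial\mathcal{E}_n$ and on the circle $|w| = \frac{n-1}{n+1}$.

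I expect the main obstacle to be organizational rather than computational, since the algebra above is clean. The care is in the justifications: that the relative boundary of $\mathcal{E}_n$ is genuinely the locus of parabolic parameters (drawn from the proof of Theorem~\ref{thm:1}), that the boundary fixed point $z_0$ selected is the Denjoy--Wolff point and not a repelling boundary fixed point, and that precisely one value of $\arg(w)$ lands in the half-open sector $S_n$. Pinning down this uniqueness, and matching the resulting parameter with the point $\partial\mathcal{E}_n \cap \{w : |w| = \frac{n-1}{n+1}\}$, completes the identification of $\mathcal{M}_n$.
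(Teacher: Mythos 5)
Your proposal is correct and follows the paper's core strategy exactly: reduce via Theorem \ref{thm:f''} to finding the parabolic parameters with $B''(z_0)=0$ at the Denjoy--Wolff point, compute $B'' = nA^{n-2}\bigl((n-1)(A')^2 + AA''\bigr)$, and analyze the vanishing of the factor $(n-1)(1-|w|^2)+2\overline{w}(z_0-w)$. Where you diverge is in the endgame, and your version is arguably cleaner: the paper solves the factor equation for $z$ and imposes $|z|=1$ to get $|w|=\frac{n-1}{n+1}$, then has to invoke Lemmas \ref{lem:j} and \ref{lem:2} to argue that a parabolic parameter of that modulus must have $e^{i(\psi+\pi)}$ as its neutral fixed point; you instead couple $B''(z_0)=0$ with the parabolicity condition $|B'(z_0)|=1$ and solve the pair simultaneously, which yields both $|w|=\frac{n-1}{n+1}$ and $z_0=-w/|w|$ by pure algebra (via the identity $1-\overline{w}z_0=\tfrac12(n+1)(1-|w|^2)$), after which the fixed-point relation $z_0^{n-1}=1$ pins $\arg(w)$ to the $n-1$ values, exactly one of which lies in $S_n$. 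This buys you independence from the geometric lemmas at that step and makes the location of the Denjoy--Wolff point an output of the computation rather than an input. The remaining loose end you already flag is the converse direction --- that the single parameter so identified really is parabolic with $J(B)=\partial\D$ and lies on $\partial\mathcal{E}_n$ --- which follows by reversing the computation and noting (as in the first paragraph of the proof of Lemma \ref{lem:2}) that when $e^{i(\psi+\pi)}$ is fixed, $s_0=\frac{n-1}{n+1}$; the paper handles this by the same appeal, so your argument is complete to the same standard.
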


The set $\mathcal{M}_n$ plays the same role for the family of Blaschke products considered in this paper as the Mandelbrot set does for the family of quadratic polynomials of the form $z^2+c$. Now, $z^2+c_1$ and $z^2+c_2$  are not conjugate unless $c_1=c_2$. Analogous to this fact, given two distinct Blaschke products with Julia sets equal to $\partial \D$, when restricted to $\partial \D$ they are not absolutely continuously conjugate by a result of Shub and Sullivan \cite{SS} (see also \cite{Hamilton}) unless they are conjugate by a M\"obius mapping. On the other hand, by Proposition \ref{prop1} no two elements of $\mathcal{B}_n$ are conjugate by M\"obius mappings.

We remark that every degree two Blaschke product is unicritical, and leave open the question of describing the conjugacy class of an element of $\mathcal{B}_2$ in the space of all degree two Blaschke products.

An application of these results will be given in \cite{F}. There, it will be shown that in the neighbourhood of a fixed point of a quasiregular mapping in the plane with constant complex dilatation and of any local index, the behaviour of the iterates can be determined by a conjugate of a Blaschke product. Hence knowing when such a Blaschke product has Julia set equal to $\partial \D$ or a Cantor subset of $\partial \D$ has consequences for the dynamics of the quasiregular mapping.

The author would like to thank Zhuan Ye, Cao Chunlei and Katherine Plikuhn for interesting discussions and comments on an earlier draft of this paper. The diagrams used in this paper were produced by Katherine Plikuhn.

\section{Proof of Proposition \ref{prop1}}

Let $B$ be a unicritical Blaschke product of degree $n$ with critical point $z_0$. Let $A(z) = \frac{z-z_0}{1-\overline{z_0}z}$ and define $B_1 = A\circ B\circ A^{-1}$. It is not hard to see that $B_1$ is unicritical with critical point $0$. 

Now, $z^n$ is also a unicritical Blaschke product of degree $n$ with critical point $0$. Hence by \cite[Corollary 1]{Zakeri}, there exists a M\"obius mapping $M$ such that $B_1(z) = M(z^n)$. If we now define $B_2 = M^{-1}\circ B_1 \circ M$, we have $B_2(z) = [M(z)]^n$.

Writing $M(z) = e^{i\theta}\left ( \frac{z-u}{1-\overline{u}z} \right )$, let $R(z) = e^{i\alpha}z$ with $ \alpha $ to be determined. Then
\begin{align*}
RB_2R^{-1}(z) &= e^{i\alpha}\left ( e^{i\theta} \left ( \frac{e^{-i\alpha}z-u}{1-\overline{u}e^{-i\alpha}z} \right ) \right )^n \\
&=e^{i(n\theta +(1-n)\alpha)} \left ( \frac{ z-ue^{i\alpha}}{1-\overline{ue^{i\alpha}}z }\right)^n.
\end{align*}
We can therefore choose $\alpha$ so that $n\theta +(1-n)\alpha \in 2k\pi$ for $k\in \Z$ and also so that $\arg(ue^{i\alpha}) \in [0,2\pi/(n-1))$. This shows that every unicritical Blaschke product is conjugate by a M\"obius mapping to one in the set $\mathcal{B}_n$.

Finally, we need to show that two elements of $\mathcal{B}_n$ are not conjugate by a M\"obius map. To that end, let $w\in S_n$ and let $B_w$ be the corresponding unicritical Blaschke product. Let $A$ be a M\"obius mapping and suppose $B=AB_wA^{-1}\in \mathcal{B}_n$. We have $B'(z) =0$ if and only if $B_w'(A^{-1}(z))=0$ if and only if $z=A(w)$, and so $B$ is unicritical with critical point $A(w)$. However, on the one hand we have $B(A(w)) = A(0)$, but we must also have $B(A(w))=0$ because $B\in \mathcal{B}_n$. Hence $A(0)=0$ and so $A$ is a rotation. Suppose $A(z) = e^{i \beta}$. Then
\[ B(z) = AB_wA^{-1}(z) = e^{(1-n)i\beta} \left ( \frac{ z-we^{i\beta}}{1-\overline{we^{i\beta}}z} \right ),\]
and so to have $B\in \mathcal{B}_n$ we need $(1-n)\beta \in 2k\pi$ for $k\in \Z$. This means $e^{i\beta}$ must be an $(n-1)$'th root of unity, and the uniqueness claim follows since no two distinct elements of $S_n$ are related by such a rotation.

\section{Proof of Theorem \ref{thm:1}}

\subsection{Fixing notation and outline of the proof}

Let $n \in\N$ with $n\geq 2$ be fixed. We write $w\in \D$ in polar form $w=se^{i\psi}$ and will analyze what happens when $\psi$ is fixed and $s\in [0,1)$ is varied. 
We denote by $A$ the M\"obius transformation
\[ A(z) = \left ( \frac{z-w}{1-\overline{w}z} \right)\]
and by $B$ the Blaschke product
\[ B(z)=B_w(z) = [A(z)]^n.\]

If $z_0$ is a fixed point of a Blaschke product, then $1/\overline{z_0}$ is also a fixed point. It follows that
a Blaschke product of degree $n$ either has $n-1$ repelling fixed points on $\partial \D$ and a pair of attracting fixed points in $\D$ and $\overline{\C} \setminus \overline{\D}$, or $n$ repelling fixed points on $\partial \D$ and one attracting fixed point on $\partial \D$, or all fixed points on $\partial \D$ with one of them neutral. Therefore, we will be interested in studying the set of points on $\partial \D$ where $|B'(z)|\leq1$ and when this set contains a fixed point. This will then determine whether the corresponding Blaschke product is elliptic, hyperbolic or parabolic.

\subsection{The subset of $\partial \D$ where $|B'(z)|<1$}

The derivative of $B$ is
\begin{equation}\label{eq:f'} B'(z) = nA(z)^{n-1}A'(z).\end{equation}
Since for $|z|=1$ we have $|A(z)|=1$, it follows that if $z=e^{i\phi}$,
\begin{equation} \label{eq:deriv} |B'(z)| = \frac{n(1-|w|^2)}{|1-\overline{w}z|^2} = \frac{n(1-s^2)}{ |1-se^{i(\phi-\psi)}|^2} = \frac{ n(1-s^2)}{1+s^2-2s\cos(\phi - \psi)}.\end{equation}

\begin{definition}\label{def:j}
Denote by $K=K(s)$ the set of points on $\partial \D$ where $|B'(z)|\leq 1$. More precisely,
\[ K = \left \{ e^{i\phi} \subset \partial \D :  \frac{ n(1-s^2)}{1+s^2-2s\cos(\phi - \psi)} \leq 1 \right \}.\]
\end{definition}

\begin{lemma}
\label{lem:j}
The set $K$ is empty for $s < \frac{n-1}{n+1}$, the single point $e^{i(\psi+\pi)}$ for $s=\frac{n-1}{n+1}$ and is an arc in $\partial \D$ centred at $e^{i(\psi+\pi)}$ for $s > \frac{n-1}{n+1}$. 
\end{lemma}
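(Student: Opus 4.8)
The plan is to reduce the defining inequality of $K$ to a single threshold condition on $\cos(\phi-\psi)$ and then read off the three cases by elementary means. Since $s\in[0,1)$, the denominator $1+s^2-2s\cos(\phi-\psi)=|1-se^{i(\phi-\psi)}|^2\geq(1-s)^2>0$ is strictly positive, so I may clear it without reversing the inequality. The condition $|B'(e^{i\phi})|\leq1$ then becomes $n(1-s^2)\leq1+s^2-2s\cos(\phi-\psi)$, and for $s>0$, dividing by $2s$ gives
\[ \cos(\phi-\psi)\leq g(s):=\frac{(n+1)s^2-(n-1)}{2s}. \]
(The case $s=0$ reads $n\leq1$, impossible for $n\geq2$, so $K=\varnothing$ there, consistent with $0<\tfrac{n-1}{n+1}$.) Because the left-hand side depends on $\phi$ only through $\cos(\phi-\psi)$, which attains its minimum $-1$ exactly at $\phi=\psi+\pi$ and increases strictly as $\phi$ moves away from $\psi+\pi$ on either side, the entire structure of $K$ is controlled by where $g(s)$ sits relative to $[-1,1]$.

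First I would settle when $K$ is nonempty. The inequality $\cos(\phi-\psi)\leq g(s)$ has a solution if and only if $g(s)\geq-1$, and when $g(s)=-1$ the only solution is $\phi=\psi+\pi$. Now $g(s)\geq-1$ rearranges to $(n+1)s^2+2s-(n-1)\geq0$, which factors as
\[ \big((n+1)s-(n-1)\big)(s+1)\geq0. \]
On $[0,1)$ the factor $s+1$ is positive, so this holds precisely when $s\geq\frac{n-1}{n+1}$. This already produces the first two cases: $K=\varnothing$ for $s<\frac{n-1}{n+1}$, and $K=\{e^{i(\psi+\pi)}\}$ at $s=\frac{n-1}{n+1}$, where $g(s)=-1$.

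The step I expect to require the most care is checking that, in the remaining range, $K$ is a proper arc rather than the whole circle, i.e. that $g(s)<1$ for every $s\in[0,1)$. This amounts to $(n+1)s^2-2s-(n-1)<0$, which factors as
\[ \big((n+1)s+(n-1)\big)(s-1)<0. \]
On $[0,1)$ the first factor is positive while $s-1<0$, so the product is negative and hence $g(s)<1$ throughout. Therefore, for $s>\frac{n-1}{n+1}$ we have $-1<g(s)<1$, and the solution set of $\cos(\phi-\psi)\leq g(s)$ is the arc $\{\phi:|\phi-(\psi+\pi)|\leq\pi-\arccos g(s)\}$, a genuine arc centred at $e^{i(\psi+\pi)}$ with half-width in $(0,\pi)$. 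This establishes the third case and completes the proof.
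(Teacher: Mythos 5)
Your proof is correct and follows essentially the same route as the paper: both reduce the condition $|B'(e^{i\phi})|\leq 1$ to a threshold inequality on $\cos(\phi-\psi)$ and read off the three cases from where that threshold sits in $[-1,1]$. Your version is slightly more explicit in one respect the paper leaves implicit, namely the verification that $g(s)<1$ on $[0,1)$, which rules out $K$ being all of $\partial\D$.
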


\begin{proof}
By \eqref{eq:deriv}, if $s$ is fixed, the smallest value that $|B'(z)|$ takes on $\partial \D$ is when $z=e^{i(\psi+\pi)}$ and is given by $\frac{n(1-s)}{1+s}$. This is decreasing in $s$ and is equal to $1$ when $s = \frac{n-1}{n+1}$. Hence the lemma follows from this observation and from the symmetry of the expression in \eqref{eq:deriv}.
\end{proof}

\begin{definition}
If $s>\frac{n-1}{n+1}$, so that $K$ is a non-empty arc, we denote the endpoints of $K$ by $e^{i\phi_1}$ and $e^{i\phi_2}$ with $\phi_1<\phi_2$ under the convention that $\arg(z) \in [\psi,\psi+2\pi)$.
\end{definition}
In particular, $B'(e^{i\phi_1}) = B'(e^{i\phi_2}) =1$.
Given an arc $I$ in $\partial \D$, we denote by $|I|$ the arc-length of $I$.

\begin{figure}[h]
\begin{center}
\includegraphics{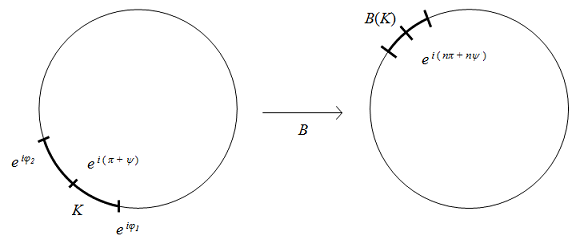}
\caption{Diagram showing the action of $B$ on $K$.}\label{fig:1}
\end{center}
\end{figure}

\begin{lemma}
\label{lem:fj}
With the notation above, if $s\geq \frac{n-1}{n+1}$, $|K| = 2\pi -2\cos^{-1}(t)$, where
\[ t=t(s) = \frac{1-n+(1+n)s^2}{2s},\]
and $|B(K)| = n(2\pi - 2\cos^{-1}(u))$, where
\[u = u(s) = \frac{1-n-(1+n)s^2}{2ns}.\]
\end{lemma}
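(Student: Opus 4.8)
The plan is to treat the two arc-length computations separately: the first is essentially a rewriting of the inequality defining $K$, while the second reduces, via the multiplicative structure $B = A^n$, to the image of $K$ under the single M\"obius map $A$.

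For $|K|$, I would start from Definition \ref{def:j} and note that the denominator $1 + s^2 - 2s\cos(\phi-\psi) = |1 - se^{i(\phi-\psi)}|^2$ is strictly positive, so clearing it is harmless and the condition $|B'(e^{i\phi})| \le 1$ is equivalent to $\cos(\phi-\psi) \le t$ with $t = \frac{1-n+(1+n)s^2}{2s}$. By Lemma \ref{lem:j}, $K$ is the arc centred at $\phi - \psi = \pi$, so its endpoints occur where $\cos(\phi-\psi) = t$, that is, at $\phi - \psi = \pm\cos^{-1}(t)$, and the arc running between them through $\phi-\psi = \pi$ has length $2\pi - 2\cos^{-1}(t)$. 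I would include the short check that $t \in [-1,1)$ for $s \in [\frac{n-1}{n+1},1)$, with $t = -1$ exactly at $s = \frac{n-1}{n+1}$ recovering the single-point case, so that $\cos^{-1}(t)$ is well defined.

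For $|B(K)|$, the key simplification is that $|A| = 1$ on $\partial\D$, so by \eqref{eq:f'} we have $|B'(e^{i\phi})| = n|A(e^{i\phi})|^{n-1}|A'(e^{i\phi})| = n|A'(e^{i\phi})|$. Integrating the length element over $K$ then gives $|B(K)| = \int_K |B'|\,d\phi = n\int_K |A'|\,d\phi = n\,|A(K)|$, reducing everything to the arc-length of the image of $K$ under $A$ alone. (One can verify a posteriori that $n|A(K)| < 2\pi$, so $B(K)$ is a genuine arc rather than a wrapped image, but this is not needed for the length identity.) To compute $|A(K)|$ I would factor $A(e^{i\phi}) = e^{i\psi}\,\frac{e^{i\beta}-s}{1-se^{i\beta}}$ with $\beta = \phi - \psi$, noting first that the centre is fixed, $A(e^{i(\psi+\pi)}) = e^{i(\psi+\pi)}$, so $A(K)$ is again an arc about $e^{i(\psi+\pi)}$. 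The symmetry $\beta \mapsto -\beta$ is complex conjugation of the Blaschke factor, hence the two endpoints $\beta = \pm\cos^{-1}(t)$ map to a conjugate pair $e^{i(\psi\pm\gamma_1)}$, giving $|A(K)| = 2\pi - 2\gamma_1$, where $\gamma_1 \in (0,\pi)$ satisfies $e^{i\gamma_1} = \frac{e^{i\beta_1}-s}{1-se^{i\beta_1}}$ at $\beta_1 = \cos^{-1}(t)$. Taking real parts, $\cos\gamma_1 = \frac{(1+s^2)\cos\beta_1 - 2s}{1+s^2 - 2s\cos\beta_1}$; substituting $\cos\beta_1 = t$ collapses the denominator to $n(1-s^2)$, and a short manipulation of the numerator identifies $\cos\gamma_1$ with $u = \frac{1-n-(1+n)s^2}{2ns}$. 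Thus $\gamma_1 = \cos^{-1}(u)$ and $|B(K)| = n|A(K)| = n(2\pi - 2\cos^{-1}(u))$.

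The two trigonometric rearrangements are routine. The step I expect to need the most care is the bookkeeping for $|A(K)|$: confirming that the relevant endpoint argument is the principal value $\cos^{-1}(u)$ (equivalently that the Blaschke factor sends the upper semicircle to the upper semicircle, fixing $\pm 1$), and that $u \in [-1,1]$ throughout $s \in [\frac{n-1}{n+1},1)$, so that $2\pi - 2\cos^{-1}(u)$ genuinely records the arc through $e^{i(\psi+\pi)}$ and not its complement.
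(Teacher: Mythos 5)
Your proposal is correct and follows essentially the same route as the paper: read off the endpoints of $K$ from $\cos(\phi-\psi)=t$, reduce $|B(K)|$ to $n|A(K)|$ via the power map, and identify $\cos^{-1}(u)$ by computing the real part of the image of an endpoint under the rotated M\"obius factor. Your extra checks (that $t,u\in[-1,1]$ and that the image arc does not wrap) are sensible refinements of the same argument rather than a different method.
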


See Figure \ref{fig:1} for $K$ and $B(K)$ when $s>\frac{n-1}{n+1}$.

\begin{proof}
By Lemma \ref{lem:j}, since $K$ is an arc and the endpoints of $K$ are those $e^{i\phi}$ for which
\[ \cos(\phi - \psi) = \frac{1-n+(1+n)s^2}{2s},\]
the first part follows. For the second part, since $B(z)=A(z)^n$, $B(K)$ will be an arc. To find $|B(K)|$, we just need to find $|A(K)|$ and then multiply by $n$. Note that since $|B'(z)|\leq 1$ for $z\in K$, $|B(K)| \leq |K| <2\pi$.
Now,
\begin{align*}
 e^{-i\psi}A(e^{i\phi}) &= \frac{ e^{-i\psi} ( e^{i\phi} - se^{i\psi})}{1-se^{-i\psi}e^{i\phi} }\\
&= \frac{ (1+s^2)\cos(\phi-\psi) -2s +i(1-s^2)\sin(\phi - \psi) }{1+s^2-2s\cos(\phi-\psi)}
\end{align*}
Hence the endpoints of $K$ are mapped by $e^{-i\psi}A(z)$ onto
\[\frac{(1+s^2)t -s \pm  i(1-s^2)\sqrt{1-t^2} }{1+s^2-2st}, \]
which has real part
\[ \frac{ (1+s^2)\left ( \frac{1-n+(1+n)s^2}{2s}\right ) -2s }{1+s^2-2s\left ( \frac{1-n+(1+n)s^2}{2s} \right )  } = \frac{1-n-(1+n)s^2}{2ns} =:u. \]
Since $z\mapsto e^{-i\psi }z$ is just a rotation and does not change arc-length, it follows that $|A(K)| = 2\pi - 2\cos^{-1}(u)$. Since $z\mapsto z^n$ multiplies arc-length by $n$, the second part of the lemma follows.
\end{proof}

\begin{lemma}
\label{lem:j'}
For $s\in (\frac{n-1}{n+1},1)$, let $p(s) = |K|$ and $q(s) = |B(K)|$. Then $p'(s) > q'(s)$ and so $|K|$ grows faster than $|B(K)|$.
\end{lemma}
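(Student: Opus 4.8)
The plan is to differentiate the explicit formulas for $p$ and $q$ from Lemma \ref{lem:fj} and compare the two derivatives directly. Since $\frac{d}{dx}\cos^{-1}(x) = -1/\sqrt{1-x^2}$, I would first record
\[ p'(s) = \frac{2t'(s)}{\sqrt{1-t^2}}, \qquad q'(s) = \frac{2nu'(s)}{\sqrt{1-u^2}}, \]
and then compute, straight from the definitions of $t$ and $u$,
\[ t'(s) = \frac{(n-1)+(n+1)s^2}{2s^2}, \qquad u'(s) = \frac{(n-1)-(n+1)s^2}{2ns^2}. \]
These are routine; the content of the lemma lies in controlling the radicals $\sqrt{1-t^2}$ and $\sqrt{1-u^2}$.

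The key step is to factor the numerators of $1-t^2$ and $1-u^2$. Each reduces to a quadratic in $s$ whose roots are $\pm 1$ and $\pm\tfrac{n-1}{n+1}$, and carrying out the factorization yields
\[ 1-t^2 = \frac{(1-s^2)\big[(n+1)^2 s^2 - (n-1)^2\big]}{4s^2}, \qquad 1-u^2 = \frac{1}{n^2}(1-t^2). \]
For $s\in(\tfrac{n-1}{n+1},1)$ the bracketed factor is positive, so both radicals are real and positive, and the crucial identity $n\sqrt{1-u^2} = \sqrt{1-t^2}$ holds. Substituting this into $q'(s)$ puts both derivatives over the common denominator $\sqrt{1-t^2}$, so that the desired inequality $p'(s) > q'(s)$ collapses to the purely algebraic statement $t'(s) > n^2 u'(s)$. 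Computing the difference gives
\[ t'(s) - n^2 u'(s) = \frac{(n+1)^2 s^2 - (n-1)^2}{2s^2}, \]
which is positive precisely when $s > \tfrac{n-1}{n+1}$, and this holds throughout the interval under consideration.

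I expect the main obstacle to be the algebra of the radicals, specifically spotting the clean factorizations above and the resulting identity $1-u^2 = (1-t^2)/n^2$. This identity is exactly what allows the two competing terms to be placed on a common footing; without it one is stuck comparing two unrelated ratios with different denominators. It is perhaps worth noting that the same factor $(n+1)^2 s^2 - (n-1)^2$ governs both the positivity of the radicands and the sign of $t'(s)-n^2u'(s)$, which is why the threshold $s=\tfrac{n-1}{n+1}$ reappears here as in Lemma \ref{lem:j}. Once the identity is in hand, the remainder is a single sign check.
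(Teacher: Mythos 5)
Your proposal is correct and follows essentially the same route as the paper: both differentiate the formulas from Lemma \ref{lem:fj}, factor $1-t^2$ and $1-u^2$ to reveal that they differ by the factor $n^2$, and reduce the inequality to the positivity of $(n+1)^2s^2-(n-1)^2$ on the given interval. The only cosmetic difference is that you isolate the identity $1-u^2=(1-t^2)/n^2$ and compare $t'$ with $n^2u'$, whereas the paper writes out $p'$ and $q'$ in full before subtracting.
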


It follows from Lemma \ref{lem:fj} that as $s\to 1$, $|K| \to 2\pi$ and $|B(K)| \to 0$, but this lemma tells us more.

\begin{proof}
By Lemma \ref{lem:fj}, we have
\[ p'(s) = \frac{2t'(s)}{\sqrt{1-t^2}} ,\quad q'(s) = \frac{2nu'(s)}{\sqrt{1-u^2}}.\]
First, we have
\[ t'(s) = \frac{n-1+(1+n)s^2}{2s^2}\]
and
\begin{align*} 
1-t^2 &= 1 - \left ( \frac{1-n+(1+n)s^2 }{2s } \right )^2 \\
&= \frac{4s^2 - (1-n)^2 -2(1-n^2)s^2 - (1+n^2)s^4}{4s^2} \\
&= \frac{ (-(1-n)^2 + (1+n)^2s^2 )(1-s^2)}{4s^2}.\end{align*}
Therefore after simplifying we have
\begin{equation} \label{eq:j'1} p'(s) = \frac{ 2(n-1 + (n+1)s^2 )}{s\sqrt{1-s^2} \sqrt{ -(1-n)^2 + (1+n)^2s^2} }.\end{equation}
Next, we have
\[ u'(s) = \frac{ n-1-(1+n)s^2}{2ns^2}\]
and
\begin{align*}
1-u^2 &= 1-  \left ( \frac{1-n+(1+n)s^2 }{2ns } \right )^2 \\
&= \frac{4n^2s^2 - (1-n)^2 -2(1-n^2)s^2 - (1+n^2)s^4}{4n^2s^2} \\
&= \frac{ (-(1-n)^2 + (1+n)^2s^2 )(1-s^2)}{4n^2s^2}.
\end{align*}
We therefore have
\begin{equation} \label{eq:j'2} q'(s) = \frac{2n(n-1-(n+1)s^2)}{s\sqrt {1-s^2}\sqrt{ -(1-n)^2 + (1+n)^2s^2}  }.\end{equation}
Comparing \eqref{eq:j'1} and \eqref{eq:j'2}, and denoting $C$ by a positive function of $n,s$ we have
\begin{align*}
p'(s)-q'(s) &= C( n-1 + (n+1)s^2 - n(n-1)+n(n+1)s^2 ) \\
&= C( -(n-1)^2 + (n+1)^2s^2 ) >0
\end{align*}
for $s>\frac{n-1}{n+1}$.
Since we are restricting to $s>\frac{n-1}{n+1}$, this proves the lemma.
\end{proof}

\subsection{Classifying Blaschke products}

With these lemmas in hand, we can move on to discussing a classification of Blaschke products of the form $B(z) = A(z)^n$.

If $s=0$ then $0$ is a superattracting fixed point of $B$ and $B$ is elliptic. We want to show that there exists some $s_0$ so that for $0\leq s<s_0$, $B$ is elliptic, for $s=s_0$, $B$ is parabolic and for $s_0<s<1$, $B$ is hyperbolic. We note that there will be cases where $s_0=1$ and so we always have ellipticity.

Recall the M\"obius transformation $A(z) = \frac{z-se^{i\psi}}{1-se^{-i\psi}z}$. The dynamics of $A$ are easily understood: $e^{i\psi}$ is a repelling fixed point and $e^{i(\psi + \pi)}$ is an attracting fixed point. Since $B(z)=A(z)^n$, this means that the arc $K$, which we recall has centre $e^{i(\psi + \pi)}$, is contracted by $A$ and expanded for $z\mapsto z^n$. Hence to find when $B$ is parabolic, we need to determine when one of the endpoints of $K$ are fixed by this process.

We first give the case where the endpoints are never fixed.

\begin{lemma}\label{lem:1}
Suppose that $(n-1)\psi  \in \{ 2k \pi :k\in\Z \}$ if $n$ is even or $(n-1)\psi  \in \{ (2k+1)\pi : k\in \Z \}$ if $n$ is odd.
Then $B$ is always elliptic.
\end{lemma}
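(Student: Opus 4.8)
The plan is to use the characterization foreshadowed in the outline above: $B$ fails to be elliptic exactly when its arc $K$ contains a fixed point of $B$ on $\partial\D$. Indeed, the Denjoy-Wolff point lies on $\partial\D$ precisely when some boundary fixed point $z_0$ has $|B'(z_0)|\le 1$, that is, lies in $K$; such a point is parabolic when $|B'(z_0)|=1$, so that $z_0$ is an endpoint of $K$, and hyperbolic when $|B'(z_0)|<1$, so that $z_0$ lies in the interior of $K$. Since $B$ is elliptic at $s=0$ and, by Lemma \ref{lem:j}, $K$ is empty for $s<\frac{n-1}{n+1}$, the set $K$ starts out free of fixed points. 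As $s$ increases, both the endpoints of $K$ and the fixed points of $B$ move continuously, so a fixed point can enter the growing arc $K$ only by first coinciding with one of its endpoints. It therefore suffices to show that, under the stated hypothesis on $\psi$, no endpoint of $K$ is ever a fixed point of $B$; this will keep $K$ free of fixed points for all $s\in[0,1)$, and hence $B$ elliptic throughout.

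To test the fixed-point condition at the endpoints I would reuse the computation in the proof of Lemma \ref{lem:fj}. Writing the endpoints of $K$ as $e^{i(\psi+\pi\mp\delta)}$ with $\delta=\tfrac12|K|=\cos^{-1}(-t)\in(0,\pi)$, the same formula for $e^{-i\psi}A(e^{i\phi})$ shows these endpoints are mapped by $A$ to $e^{i(\psi\pm\gamma)}$, where $\gamma=\cos^{-1}(u)$ and $A(K)$ is the long arc through the attracting fixed point $e^{i(\psi+\pi)}$ of $A$, of length $|A(K)|=2\pi-2\gamma$. Applying $z\mapsto z^n$ gives $B(e^{i(\psi+\pi\mp\delta)})=e^{in(\psi\pm\gamma)}$, so an endpoint is fixed by $B$ precisely when
\[ (n-1)\psi \pm (n\gamma+\delta) \equiv \pi \pmod{2\pi}. \]

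Finally I would substitute the hypothesis. Setting $\gamma'=\pi-\gamma$, so that $|B(K)|=2n\gamma'<2\pi$ forces $n\gamma'\in(0,\pi)$, and using that $(n-1)\psi\equiv 0\pmod{2\pi}$ when $n$ is even while $(n-1)\psi\equiv\pi\pmod{2\pi}$ when $n$ is odd, both sign choices and both parities should collapse to the single condition $\delta-n\gamma'\equiv\pi\pmod{2\pi}$. Since $\delta\in(0,\pi)$ and $n\gamma'\in(0,\pi)$, the quantity $\delta-n\gamma'$ lies strictly in $(-\pi,\pi)$ and can never be congruent to $\pi$ modulo $2\pi$. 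Hence no endpoint of $K$ is ever fixed, and $B$ is always elliptic. I expect the main obstacle to be the bookkeeping in the previous paragraph: correctly identifying $A(K)$ as the long arc through $e^{i(\psi+\pi)}$ (which produces the factor $\pi-\gamma$ rather than $\gamma$) and tracking the arguments modulo $2\pi$ so that the two parity cases genuinely reduce to the same relation. The continuity step in the first paragraph, though routine, will also need to be stated with some care.
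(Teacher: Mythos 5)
Your argument is correct, but it reaches the conclusion by a genuinely different route from the paper's. The paper's proof works for each $s$ separately and is purely geometric: under the hypothesis the centre $e^{i(\psi+\pi)}$ of $K$ is sent by $B$ to its antipode $e^{i\psi}$, so each half of $K$ (an arc of length $\tfrac12|K|<\pi$ issuing from $e^{i(\psi+\pi)}$) is carried, with contraction and preservation of orientation, to an arc of length $<\pi$ issuing from $e^{i\psi}$ on the same side; these lie in complementary closed semicircles meeting only at $e^{i\psi}$ and $e^{i(\psi+\pi)}$, neither of which belongs to both, so $K\cap B(K)=\emptyset$ and $K$ contains no fixed point at all. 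You instead argue by continuity in $s$, reducing to the statement that no \emph{endpoint} of $K$ is ever fixed, and then verify this by an explicit computation; I checked your sign bookkeeping and the reduction of both parity cases to $\delta-n\gamma'\equiv\pi\pmod{2\pi}$, which is indeed impossible since $0< n\gamma'\le\delta<\pi$. Your route buys an exact parabolicity criterion for the endpoints (essentially the equation that governs Lemma \ref{lem:2}), while the paper's is shorter and avoids any parameter-continuity step. The one place your write-up genuinely needs the promised care is the claim that a fixed point can only enter $K$ through an endpoint: besides an existing boundary fixed point crossing a moving endpoint, a new pair of boundary fixed points could be born inside $K$; this happens only at a double root of $B(z)-z$ on $\partial\D$, where $B'(z_0)=1$, hence $|B'(z_0)|=1$, hence $z_0$ is an endpoint of $K$ after all --- and one should also note that fixed points off $\partial\D$ come in pairs $z_0,\,1/\overline{z_0}$ and so can reach the circle only by colliding there. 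With that observation supplied, your proof is complete.
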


\begin{figure}[h]
\begin{center}
\includegraphics{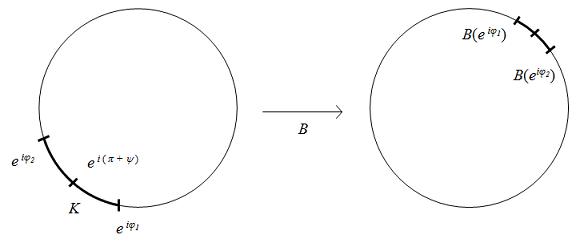}
\caption{Diagram showing the situation for Lemma \ref{lem:1}.}\label{fig:3}
\end{center}
\end{figure}

\begin{proof}
The point $e^{i(\psi + \pi)}$ is mapped to $e^{i[n(\psi + \pi) ]}$ under $B$. The conditions of the lemma state that this image is exactly the repelling fixed point of $A$ and is exactly opposite $e^{i(\psi + \pi)}$. Now, since $K$ is symmetric about $e^{i(\psi + \pi)}$, the arc-length of $K$ converges to $2\pi$ as $s\to 1$ and $K$ is contracted by $B$, no point of $K$ can be fixed by $B$. See Figure \ref{fig:3}. Therefore $B$ is never parabolic or hyperbolic and the conclusion follows.
\end{proof}

\begin{lemma}\label{lem:2}
Suppose that $(n-1)\psi  \notin \{ 2k \pi :k\in\Z \}$ if $n$ is even or $(n-1)\psi  \notin \{ (2k+1)\pi : k\in \Z \}$ if $n$ is odd. Then there exists $s_0 \in [\frac{n-1}{n+1},1)$ such that:
\begin{itemize}
\item for $0\leq s <s_0$, $B$ is elliptic,
\item for $s=s_0$, $B$ is parabolic,
\item for $s_0<s<1$, $B$ is hyperbolic.
\end{itemize}
\end{lemma}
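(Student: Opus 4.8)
The plan is to reduce the whole trichotomy to a single question --- whether the closed arc $K$ carries a fixed point of $B$, and if so whether it is interior --- and then to control that fixed point as $s$ increases by means of one monotone quantity supplied by Lemma \ref{lem:j'}. The starting observation is a dictionary between the fixed point configuration and $K$. By the classification of Blaschke products and the description of the three possible fixed point configurations recalled above, every fixed point on $\partial\D$ other than the Denjoy--Wolff point is repelling, so has $|B'|>1$ and lies off $K$; the Denjoy--Wolff point, when it lies on $\partial\D$, has $|B'|<1$ in the hyperbolic case and $|B'|=1$ in the parabolic case. Combined with Lemma \ref{lem:j} this yields: $B$ is elliptic iff $K$ contains no fixed point of $B$; $B$ is hyperbolic iff $K$ contains a fixed point in its interior; and $B$ is parabolic iff $K$ contains a fixed point and it is one of the endpoints $e^{i\phi_1},e^{i\phi_2}$. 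Thus it suffices to decide, for each $s\in[\frac{n-1}{n+1},1)$, whether $B$ has a fixed point in $K$ and whether it is interior.

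Next I would pass to the circle lift. Writing $B(e^{i\phi})=e^{i\Theta(\phi)}$ with $\Theta$ increasing and $\Theta'(\phi)=|B'(e^{i\phi})|$, and setting $\Phi(\phi)=\Theta(\phi)-\phi$, fixed points of $B$ correspond to $\phi$ with $\Phi(\phi)\in 2\pi\Z$. On $K$ we have $\Theta'=|B'|\leq 1$, with strict inequality on the interior by Lemma \ref{lem:j}, so $\Phi$ is strictly decreasing on $K$. I normalize the lift at the centre $e^{i(\psi+\pi)}$ of $K$, which $A$ fixes and which $B$ therefore sends to $e^{in(\psi+\pi)}$; this gives $\Phi=\delta:=(n-1)(\psi+\pi)$ at the centre. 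Since the endpoints of $K$ lie at angular distance $\tfrac12|K|$ from the centre while their images lie at angular distance $\tfrac12|B(K)|$ from the image of the centre (because $z\mapsto z^n$ scales angles by $n$), the two endpoints carry the $\Phi$-values $\delta-m(s)$ and $\delta+m(s)$, where
\[ m(s)=\tfrac12\bigl(|K|-|B(K)|\bigr). \]
Hence $\Phi$ maps $K$ decreasingly onto the interval $[\delta-m(s),\delta+m(s)]$, and $B$ has a fixed point in $K$ precisely when this interval of length $2m(s)$ meets $2\pi\Z$, the fixed point being interior iff the open interval meets $2\pi\Z$ and an endpoint iff a multiple of $2\pi$ equals $\delta\pm m(s)$.

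It remains to analyze $m$. By Lemma \ref{lem:fj} we have $m=\tfrac12(p-q)$ in the notation of Lemma \ref{lem:j'}, so that lemma gives $m'(s)>0$; evaluating the formulas for $t$ and $u$ at the endpoints of the $s$-range shows $m\to 0$ as $s\to\frac{n-1}{n+1}$ and $m\to\pi$ as $s\to 1$. Therefore $m$ is a strictly increasing bijection of $[\frac{n-1}{n+1},1)$ onto $[0,\pi)$. Let $\rho\in[0,\pi]$ be the distance from $\delta$ to the nearest multiple of $2\pi$, equivalently the angular distance from $e^{in(\psi+\pi)}$ to $e^{i(\psi+\pi)}$. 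Since $m<\pi$, the centred interval $[\delta-m,\delta+m]$ contains at most one multiple of $2\pi$, and it meets $2\pi\Z$ iff $m\geq\rho$. One checks that $\rho=\pi$ is exactly the hypothesis of Lemma \ref{lem:1} (it is the condition $e^{in(\psi+\pi)}=e^{i\psi}$), so the present hypothesis is equivalent to $\rho<\pi$; hence $\rho$ lies in the range $[0,\pi)$ of $m$ and $s_0:=m^{-1}(\rho)\in[\frac{n-1}{n+1},1)$ is well defined. For $s<s_0$ the interval misses $2\pi\Z$, giving no fixed point in $K$ and ellipticity (together with the vacuous range $s<\frac{n-1}{n+1}$, where $K=\varnothing$); at $s=s_0$ a multiple of $2\pi$ equals $\delta\pm\rho$, an endpoint of the $\Phi$-range, hence an endpoint of $K$, giving parabolicity; and for $s>s_0$ a multiple of $2\pi$ lies in the open interval, giving an interior fixed point and hyperbolicity. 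This is exactly the asserted trichotomy.

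The step I expect to require the most care is the bookkeeping of the lift in the second paragraph: one must fix the branch of $\Theta$ consistently for all $s$ and verify that the displacement interval can contain at most one multiple of $2\pi$, so that there is never more than one non-repelling fixed point and the passage at $s_0$ is a single, genuine transition of the fixed point through an endpoint of $K$. The monotonicity $m'>0$ from Lemma \ref{lem:j'} is precisely what forces $s_0$ to be unique and the three regimes to be consecutive rather than interleaved.
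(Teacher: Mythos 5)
Your proof is correct, and it turns on the same pivot as the paper's: parabolicity occurs exactly when an endpoint of $K$ becomes a fixed point, and the strict monotonicity $p'>q'$ from Lemma \ref{lem:j'} forces this to happen at exactly one parameter $s_0$. The difference is one of organization rather than of ideas. The paper works directly with the arcs: it first disposes of the case where $B$ fixes $e^{i(\psi+\pi)}$, reduces without loss of generality to $B(e^{i(\psi+\pi)})$ lying in the half-circle containing $e^{i\phi_2}$, locates $s_0$ by an implicit intermediate-value argument on the position of $B(e^{i\phi_2})$, and then gives separate arguments for the three regimes (an arc-length contradiction for $s<s_0$, and $B(K)\subsetneq K$ producing an attracting fixed point for $s>s_0$). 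You instead compress everything into the single quantity $m(s)=\tfrac12\bigl(|K|-|B(K)|\bigr)$ and the fixed target $\rho$, so the trichotomy becomes $m<\rho$, $m=\rho$, $m>\rho$; this buys an explicit and uniform treatment of the uniqueness of $s_0$, of the limits $m\to 0$ and $m\to\pi$, and of the degenerate case $\rho=0$ (the paper's ``$B$ fixes $e^{i(\psi+\pi)}$'' case, where $s_0=\tfrac{n-1}{n+1}$), and your remark that $2m<2\pi$ forces at most one non-repelling fixed point is a clean substitute for invoking Denjoy--Wolff uniqueness. The only spot where you should add a line is the claim that $B(K)$ is an arc centred at $B(e^{i(\psi+\pi)})$ with the endpoints of $K$ going to the endpoints of $B(K)$: this needs the symmetry of $A(K)$ about the fixed point $e^{i(\psi+\pi)}$ of $A$, which is visible in (and is implicitly used by) the computation in Lemma \ref{lem:fj}, since the real part of $e^{-i\psi}A(e^{i\phi})$ depends only on $\cos(\phi-\psi)$ while the imaginary part changes sign under reflection about $\psi+\pi$.
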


\begin{proof}
The idea is as follows. Complementary to Lemma \ref{lem:1}, this time the image of $e^{i(\psi + \pi)}$ is not exactly opposite $e^{i(\psi + \pi)}$. First, if $B$ fixes $e^{i(\psi + \pi)}$, then by Lemma \ref{lem:j}, for $s = \frac{n-1}{n+1}$, $B$ is parabolic; for $0\leq s < \frac{n-1}{n+1}$, $B$ is elliptic; and for $s>\frac{n-1}{n+1}$, $B$ is hyperbolic. 

Next, $B(e^{i(\psi + \pi)})$ is either in the arc with argument $(\psi + \pi,\psi + 2\pi)$ or $(\psi, \psi + \pi)$. Without loss of generality we may assume it is the former, that is, $\phi_0 \in (\psi+\pi, \psi + 2\pi)$ where $B(e^{i(\psi + \pi)}) = e^{i\phi_0}$. See Figure \ref{fig:4} for reference to the arguments we give below. For values of $s$ just larger than $\frac{n-1}{n+1}$, $K$ and $B(K)$ are disjoint arcs centred at $e^{i(\psi + \pi)}$ and $e^{i\phi_0}$ respectively. 

\begin{figure}[h]
\begin{center}
\includegraphics[width=6.5in]{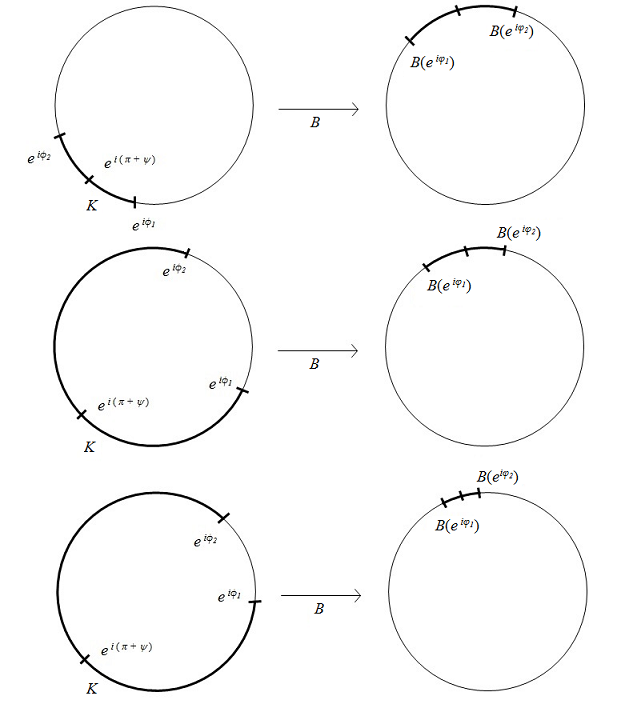}
\caption{Diagram showing the three situations that arise in Lemma \ref{lem:2}, the elliptic case ($s<s_0$) with there are no fixed points on $B$ in $K$ (top), the parabolic case ($s=s_0$) where $B(e^{i\phi_2}) = e^{i\phi_2}$ (middle) and the hyperbolic case ($s>s_0$) where $B(K)$ is strictly contained in $K$ (bottom).}\label{fig:4}
\end{center}
\end{figure}

Recall $\phi_1,\phi_2$ are the endpoints of $K$, and that we are assuming that $\phi_2$ and $\phi_0$ are both in the same semicircle with argument in $(\psi + \pi,\psi + 2\pi)$.
As $s$ increases, $|K|$ increases and we have that both $|K|\to 2\pi$ and $|B(K)| \to 0$ as $s\to 1$. By Lemma \ref{lem:j'}, $|K|$ increases faster than $|B(K)|$ as $s$ increases. Hence there exists one, and only one, $s_0 \in ( \frac{n-1}{n+1} ,1)$ such that $B(e^{i\phi_2}) = e^{i\phi_2}$.
By construction, when this occurs, $e^{i\phi_2}$ is a fixed point of $B$ for which $B'(e^{i\phi_2}) =1$ and hence $B$ is parabolic.

For $s<s_0$, if $B$ did have a fixed point in $K$, then it could not be an endpoint by construction. Suppose this fixed point is $e^{i\xi}$, and it must lie between $e^{i(\psi + \pi)}$ and $e^{i\phi_2}$. Then we have that the arc length of the interval in $\partial \D$ from $e^{i\xi}$ to $e^{i\phi_2}$ is strictly less than the arc length of the interval from $B(e^{i\xi}) = e^{i\xi}$ to $B(e^{i\phi_2})$. This contradicts the fact that $|B'(z)|\leq 1$ for $z\in K$.

For $s>s_0$, by Lemma \ref{lem:j'} the image of the arc $(\psi + \pi, \phi_2)$ strictly contains its image and hence there exists a fixed point with $|B'(z)| <1$. Hence $B$ is hyperbolic. 
\end{proof}

It now follows from Lemmas \ref{lem:1} and \ref{lem:2} that $\{ w\in \D:B_w \in \widetilde{\mathcal{E}_n} \}$ is an open subset of $\D$ which is starlike about $0$. It is clear that the relative boundary of this set in $\D$ consists of parabolic elements and so it remains to investigate these parabolic elements.

\begin{proof}[Proof of Theorem \ref{cor:1}]
By Theorem \ref{thm:f''}, parameters in the relative boundary of $\mathcal{E}_n$ in $S_n$ give rise to parabolic Blaschke products with $J(B) = \partial \D$ only when $B''(z_0) =0$, where $z_0$ is the Denjoy-Wolff point of $B$.
Differentiating \eqref{eq:f'}, we have
\begin{align*} B''(z) &= nA(z)^{n-2} \left ( (n-1)[A'(z)]^2 + A(z)A''(z) \right ) \\
&= nA(z)^{n-2} \left [ \frac{(n-1) (1-|w|^2)^2 }{(1-\overline{wz})^4} + \left ( \frac{z-w}{1-\overline{w}z} \right ) \left (\frac{ 2\overline{w}(1-|w|^2)}{(1-\overline{w}z)^3} \right )  \right ] \\
&= \frac{n(1-|w|^2) A(z)^{n-2}}{(1-\overline{w}z)^4} \left ( (n-1)(1-|w|^2) + 2\overline{w}(z-w) \right).
\end{align*}
For $z\in \partial \D$, $A(z) \neq 0$ and so we have to analyze the factor $  (n-1)(1-|w|^2) + 2\overline{w}(z-w)$. This is zero when
\[ z = \frac{ 2|w|^2 + (1-n) +(n-1)|w|^2 }{2\overline{w}} = \frac{(1-n)+(n+1)|w|^2}{2\overline{w}}.\]
Since $|z|=1$, this leads to
\[ (n+1)|w|^2-2|w|+(1-n) =0,\]
which has solutions $|w|=1$ and $|w| = \frac{n-1}{n+1}$. This first of these is inadmissible, but the second is allowable.
By Lemmas \ref{lem:j} and \ref{lem:2}, the only situation when a parabolic parameter $w$ has $|w|=\frac{n-1}{n+1}$ is when $e^{i(\psi + \pi)}$ is fixed by $B$. Since $e^{i(\psi + \pi)}$ is fixed by $A$, this leads to the condition that
$(n-1)\psi  =0 \: (\operatorname{mod} 2\pi )$ if $n$ is odd or $(n-1)\psi  = \pi \: ( \operatorname{mod} 2 \pi)$ if $n$ is even.

Hence there are $n-1$ distinct points on the relative boundary of $\widetilde{\mathcal{E}_n}$, corresponding to where this boundary intersects $\{ w : |w| = \frac{n-1}{n+1} \}$, for which the corresponding Blaschke product is parabolic with $J(B) = \partial \D$. For any other parabolic parameter, Theorem \ref{thm:f''} implies that $J(B)$ is a Cantor subset of $\partial \D$.

Since these $n-1$ points are obtained through rotations by $(n-1)$'th roots of unity, it follows there is just one point of the relative boundary of $\mathcal{E}_n$ that is in $\mathcal{M}_n$.
\end{proof}

We end with the observation that when $n=2$, there is one point in $\partial \mathcal{E}_2\cap\mathcal{M}_2$, given by $|w| = 1/3$. This tells us that
\[ B(z) =  \left ( \frac{ z + 1/3}{1+ z/3} \right )^2\]
is parabolic with Denjoy-Wolff point $1$ and has $J(B) = \partial \D$.

\end{document}